\newtheorem{lemma}{Lemma}
\newtheorem{theorem}{Theorem}
\DeclareMathSymbol{\shortminus}{\mathbin}{AMSa}{"39}
\def\BibTeX{{\rm B\kern-.05em{\sc i\kern-.025em b}\kern-.08em
    T\kern-.1667em\lower.7ex\hbox{E}\kern-.125emX}}
\begin{document}

\title{Fast Generation of Feasible Trajectories in Direct Optimal Control}
\author{David Kiessling$^{1}$, Katrin Baumg\"artner$^{2}$, Jonathan Frey$^{2}$, Wilm Decr\'e$^{1}$, Jan Swevers$^{1}$, Moritz Diehl$^{2, 3}$
\thanks{$^{1}$Dept.~of~Mechanical~Engineering,~KU~Leuven and Flanders Make@KU Leuven,
        {\tt\small david.kiessling@kuleuven.be}}%
\thanks{$^{2}$Dept. of Microsystems Engineering, University of
Freiburg}%
\thanks{$^{3}$Dept. of Mathematics, University of Freiburg.}%
\thanks{This work has been carried out within the framework of the Flanders
Make SBO project DIRAC: Deterministic and Inexpensive Realizations of Advanced
Control.}
}

\maketitle
\thispagestyle{empty}
\pagestyle{empty}
\begin{abstract}
    This paper examines the question of finding feasible points to discrete-time optimal control problems. 
    The optimization problem of finding a feasible trajectory is transcribed to an unconstrained optimal control problem. 
    An efficient algorithm, called FP-DDP, is proposed that solves the resulting problem using Differential Dynamic Programming preserving feasibility with respect to the system dynamics in every iteration. 
    Notably, FP-DDP admits global and rapid local convergence properties induced by a combination of a Levenberg-Marquardt method and an Armijo-type line search. 
    An efficient implementation of FP-DDP within \texttt{acados} is compared to established methods such as Direct Multiple Shooting, Direct Single Shooting, and state-of-the-art solvers.
\end{abstract}
\begin{IEEEkeywords}
Optimal control, optimization algorithms, numerical algorithms
\end{IEEEkeywords}
\section{Introduction}
\label{sec:introduction} 
\IEEEPARstart{T}{his} paper analyzes the problem of finding feasible points of Nonlinear Optimization Problems (NLPs) arising in discrete-time optimal control, 
which are given by:
\begingroup\abovedisplayskip=0pt \belowdisplayskip=3pt
\begin{mini!}|s|
{\scriptstyle{\substack{x_1, \ldots, x_{N+1}, \\ u_1, \ldots, u_N}}}
{\sum_{i=1}^{N} l_i\left(x_i, u_i\right)+ l_{N+1}\left(x_{N+1}\right)}
{\label{eq:nlp_ocp}}
{}
\addConstraint{}{\!\!\!\!x_1= \bar{x}_1, x_{i+1}=\phi_i\left(x_i, u_i\right),}{i=1,\ldots, N\! \label{eq:dynamics_constraint}}
\addConstraint{}{\!\!\!\!c_i\left(x_i, u_i\right) \leq 0,}{i=1,\ldots,N\!}
\addConstraint{}{\!\!\!\!c_{N+1}\left(x_{N+1}\right) \leq 0.}
\end{mini!}
\endgroup
In the above definition, the state vectors are denoted by $x_i \in \mathbb{R}^{n_x}$, the controls by $u_i\in \mathbb{R}^{n_u}$, and the initial state by $\bar{x}_1\in\mathbb{R}^{n_x}$.
The stage and terminal cost are $l_i$ and $l_{N+1}$, respectively.
The function $\phi_i$ is the discrete-time dynamics which yields the state $x_{i+1}$ at the next stage given the current state and control $x_i, u_i$.
The remaining constraints are the path and terminal constraints $c_i$ and $c_{N+1}$.

Reducing infeasibility and finding feasible points is a key building block of constrained optimization algorithms. 
State-of-the-art nonlinear optimization solvers such as \texttt{IPOPT}~\cite{Waechter2006} or \texttt{filterSQP}~\cite{fletcher1997} employ a feasibility restoration phase to avoid cycling, to overcome infeasible subproblems, and failure of sufficient decrease by iterating towards the feasible set.
Feasible iterate solvers, such as \texttt{SEQUOIA}~\cite{Nita2023} or \texttt{FSQP}~\cite{Tits1992}, require every iterate, including the initial guess, to be feasible.
An initial feasibility phase which solves a feasibility problem is often employed to find a feasible initial guess to a generic NLP.

The feasibility-projected Sequential Quadratic Programming (SQP) framework \cite{Tenny2004}, \cite{Wright2004} is a feasible iterate method that exploits structure to solve linearly constrained Optimal Control Problems (OCPs). 
This method employs a trust-region SQP method that uses a feasibility projection to achieve feasibility in every iteration.
The feasibility projection is based on Differential Dynamic Programming (DDP) to guarantee feasibility with respect to the dynamics constraints while solving an additional optimization problem to maintain feasibility with respect to linear inequality constraints.
Similarly, the method proposed in this paper is based on DDP but relies on reformulating additional inequality constraints as quadratic penalties in the objective function.

DDP, originally introduced in \cite{mayne1966}, is a method for solving unconstrained OCPs that performs a Riccati backward recursion for a quadratic approximation of the unconstrained OCP and a nonlinear forward simulation to achieve feasibility with respect to the dynamics in every iteration. 
Local quadratic convergence for exact Hessian DDP was first shown in \cite{murray1984}.
The first global convergence proof for line search DDP was presented in \cite{Yakowitz1984}. In \cite{Baumgaertner2023a}, it was shown that DDP, Direct Multiple Shooting (DMS), and Direct Single Shooting (DSS) recover the same local convergence rate when used with a Generalized Gauss-Newton (GGN) Hessian approximation.
Constrained optimization algorithms require a globalization mechanism, where both the decrease of the objective function and the infeasibility of the constraints are controlled. 
Since the relative weighting of these two terms is nontrivial, DDP is used in this paper as a means of avoiding globalization of the dynamics constraints.

The contributions of this paper are threefold. 
Firstly, the optimization problem of finding a feasible trajectory is discussed starting from state-of-the-art solvers and a suitable structure-preserving feasibility problem formulation is identified for the special case of OCPs.
Secondly, based on the proposed optimization problem, a globally convergent DDP algorithm is introduced. 
Finally, the novel feasibility algorithm is compared against state-of-the-art methods, DMS, and DSS, on challenging OCPs.

The remainder of this paper is structured as follows.
Section~\ref{sec:problem_statement_and_discussion} discusses the optimization problem of finding a feasible point.
In Section~\ref{sec:algorithm_section}, the novel feasibility algorithm is introduced.
Simulation results are presented in Section~\ref{sec:simulation_results}.
Section \ref{sec:conclusion} concludes this paper. 
\paragraph*{Notation}
The concatenation of vectors $w_1\in\mathbb{R}^{n_1}$ to $w_n\in\mathbb{R}^{n_n}$ is denoted by $(w_1,\ldots,w_n)\in\mathbb{R}^{n_1+\ldots+n_n}$. 
Iteration indices are denoted by a letter $k$ and superscripts in parentheses, e.g., $w^{(k)}\in\mathbb{R}^{n_w}$.
For a given iterate $w^{(k)}$, quantities evaluated at that iterate will be denoted with the same superscript, e.g., $f^{(k)}$ or $\nabla f^{(k)}$.
In the context of optimal control problems, an OCP stage is denoted by a letter $i$ and subscript, e.g., $x_i$ and $u_i$.  Let $[\cdot]^+ \colon \mathbb{R}^n \to \mathbb{R}^n, x\mapsto \max\{0, x\}$ denote the componentwise maximum of a vector with the zero vector.
For functions $f,g\colon\mathbb{R}^{n_w}\to\mathbb{R}$, $f(w)=\mathcal{O}(g(w))$ means that there exists a constant $C>0$ such that $\vert f(w)\vert \leq C g(w)$ in a neighborhood around $0$.
\section{Problem Statement}
\label{sec:problem_statement_and_discussion}
Before introducing a favorable problem formulation which will be the basis of the proposed algorithm, a brief overview of related approaches is provided.
To this end, the vector of constraint violations for \eqref{eq:nlp_ocp} is given by $v(w) := (v_0, \ldots, v_{N+1})$ where $w=(x_1, \dots, x_{N+1}, u_1, \dots, u_N)$
and the stagewise constraint violations are
$v_0 := x_1 - \bar x_1$,
$v_i := (x_{i+1} - \phi_i(x_i, u_i), [c_i(x_i, u_i)]^+)$, $i=1, \ldots, N$, and
$v_{N+1} = [c_{N+1}(x_{N+1})]^+$.
A point $w$ is feasible for \eqref{eq:nlp_ocp} if and only if $\Vert v(w) \Vert=0$ for a given norm $\Vert\cdot\Vert$. One way of formulating the problem of finding a feasible point for \eqref{eq:nlp_ocp} is thus
\begingroup\abovedisplayskip=3pt \belowdisplayskip=3pt
\vspace{-3mm}
\begin{align}
\label{eq:general_feasibility_problem}
    \min_{w}\quad \Vert v(w)\Vert.
\end{align}
\endgroup
\subsection{Related Literature}
Several successful nonlinear programming solvers solve the feasibility problem \eqref{eq:general_feasibility_problem}. 
This may be during a feasibility restoration phase as for \texttt{filterSQP} and \texttt{IPOPT}.
The feasible iterate solvers \texttt{FSQP} and \texttt{SEQUOIA} solve the feasibility problem before starting their main algorithm from a feasible point. 
An overview of the feasibility problems used in the different solvers is given in Table \ref{tab:comparison_feasibility_problems}.
\begingroup\abovedisplayskip=0.5pt \belowdisplayskip=0.5pt
\begin{table}[]
\caption{Comparison of feasibility problems for different state-of-the-art solvers.}
\label{tab:comparison_feasibility_problems}
\begin{tabular}{l l l}
\toprule
Solver & Problem & Method \\ 
\midrule
\texttt{IPOPT} & $\min_w \Vert v(w)\Vert_1 + \frac{\rho}{2}\Vert w-w^{(0)}\Vert_2^2$ & Interior Point\\[3pt]
\texttt{filterSQP} & $\min_w \Vert v(w)\Vert_{1}$ & SQP\\[3pt]
\texttt{FSQP} & $\min_w \Vert v(w)\Vert_{\infty}$ & SQP\\[3pt]
\texttt{SEQUOIA} & $\min_w \frac{1}{2}\Vert v(w)\Vert_2^2$ & Unconstrained\\
\bottomrule
\end{tabular}
\vspace{-6mm}
\end{table}
\endgroup

Firstly, consider the feasibility restoration problem of \texttt{IPOPT} as given in Table~\ref{tab:comparison_feasibility_problems}.
The feasibility restoration problem contains in addition to the minimization of the constraint violation, a regularization term $\frac{\rho}{2}\Vert w-w^{(0)}\Vert_2^2$ with $\rho>0$.
If $\rho$ is chosen small enough, the \texttt{IPOPT} subproblem is the exact penalty formulation of finding the closest feasible point to $w^{(0)}$ \cite{Waechter2006}. 
Note that the correct choice of $\rho$ is not known beforehand. 
The first term $\Vert v(w)\Vert_1$ is nonsmooth and is handled in \texttt{IPOPT} or \texttt{filterSQP} by a smooth reformulation introducing slack variables in every constraint. 
Another key feature of introducing slack variables is that the Linear Independence Constraint Qualification (LICQ) is fulfilled at every feasible point and that the underlying subproblems are always feasible. 
The Hessian of the Lagrangian for the feasibility problem is in general indefinite and it is necessary to properly deal with its negative curvature. 
An important observation for OCPs is that the gradients of the constraints \eqref{eq:dynamics_constraint} are always linearly independent and build a constraint Jacobian with full row rank in an SQP subproblem. 
Therefore, it is not strictly necessary to introduce slack variables for these constraints.

Secondly, the \texttt{SEQUOIA} solver minimizes $\frac{1}{2}\Vert v(w)\Vert_2^2$ in its first iteration to find a feasible point.
A favorable property of this formulation is that it has a nonlinear least-squares objective. 
Due to this structure, no additional reformulation is required to solve the unconstrained optimization problem and a GGN Hessian approximation can be used. 
The outer convexity of the GGN Hessian is $\tfrac{1}{2}\Vert[\cdot]^+\Vert_2^2$ and the inner nonlinearities are the constraint functions $c_{i}, \phi_i$.
Assuming that the constraints of \eqref{eq:nlp_ocp} are continuously differentiable, then the \texttt{SEQUOIA} function is only once continuously differentiable and standard analysis needs to be adapted resulting in the framework of semismooth Newton methods, cf. e.g.,~\cite{Hintermueller2010}.
\subsection{A Favorable Problem Formulation}
Combining the advantages of the previously discussed feasibility problems and considering the OCP structure, an appealing structure-preserving formulation for a feasibility problem is the following Feasibility Optimal Control Problem (FOCP):
\begingroup\abovedisplayskip=2pt \belowdisplayskip=2pt
\begin{mini!}|s|
{\scriptstyle{\mathbf{x}, \mathbf{u}}}
{f(\mathbf{x},\mathbf{u}):=\textstyle\sum_{i=1}^{N} f_i\left(x_i, u_i\right)+ f_{N+1}\left(x_{N+1}\right)}{\label{eq:nlp_focp}}
{}
\addConstraint{x_{1}}{=u_0,\:\:x_{i+1}=\phi_i\left(x_i, u_i\right),~\label{eq:focp_dynamics_constraint}}{i=1,\ldots,N,}
\end{mini!}
\endgroup
where 
\begingroup\abovedisplayskip=2pt \belowdisplayskip=2pt
\begin{align*}
    &f_1(x_1,u_1) :=\tfrac{1}{2}\left(\Vert x_1 - \bar{x}_1\Vert_2^2 + \Vert [c_1\left(x_1, u_1\right)]^+\Vert_2^2\right),\\
    &f_i(x_i,u_i) := \tfrac{1}{2}\Vert [c_i\left(x_i, u_i\right)]^+\Vert_2^2,~i=2,\ldots, N,\\
    &f_{N+1}(x_{N+1}) := \tfrac{1}{2}\Vert [c_{N+1}\left(x_{N+1}\right)]^+\Vert_2^2,
\end{align*}
\endgroup
with $\mathbf{u}:=(u_0,\ldots,u_{N})$ and $\mathbf{x}:=(x_1,\ldots,x_{N+1})$.
Note that an auxiliary control $u_0\in\mathbb{R}^{n_x}$ was introduced and the initial condition was softened in the objective function $f_1$ such that \eqref{eq:nlp_focp} is an unconstrained OCP without initial condition. 
This choice is motivated by applications such as model predictive control where the solver is often initialized with the solution of the previous time step which is feasible for all constraints except for the initial condition. Another application are Point-to-Point (P2P) motions where the task is to bring a system from initial state $\bar{x}_1$ to terminal state $\bar{x}_{N+1}$. Since the initial state and the terminal state should be treated equally, the initial condition constraint is also softened in the objective function.
In general, the optimal solution of \eqref{eq:nlp_focp} is not unique.
All feasible points of \eqref{eq:nlp_ocp} are an optimal solution to \eqref{eq:nlp_focp} and fulfill $f(\mathbf{x}, \mathbf{u})=0$.
If the states are eliminated using~\eqref{eq:focp_dynamics_constraint}, the objective function only expressed by the controls is denoted by $F(\mathbf{u})$.

Formulation \eqref{eq:nlp_focp} has the following favorable properties:
(i) keeping the dynamics constraints as constraints preserves the same problem dimension as those of the original OCP and ensures LICQ satisfaction at every feasible point of \eqref{eq:nlp_focp}; 
(ii) given the unconstrained OCP structure of \eqref{eq:nlp_focp}, a Riccati recursion can be used to efficiently solve the subproblems;
(iii) due to the nonlinear least-squares objective function in~\eqref{eq:nlp_focp}, a GGN Hessian approximation can be used to approximate the Hessian of the Lagrangian of~\eqref{eq:nlp_focp};
(iv) since the constraint violation is zero at any optimal solution, the exact Hessian and the GGN Hessian coincide. 
This implies that local superlinear convergence towards the optimal solution can be achieved.
\section{FP-DDP Algorithm}
\label{sec:algorithm_section}
In this section, the algorithm for finding feasible trajectories, FP-DDP, is described.
The proposed method combines DDP with a tailored backtracking line search to guarantee global convergence.
FP-DDP is summarized in Algorithm~\ref{alg:ffp}.
The following sections discuss its building blocks.
\subsection{Differential Dynamic Programming}
\label{sec:ddp}
Given the current iterate $(\bar{\mathbf{x}}, \bar{\mathbf{u}})$ and referring to \cite{Baumgaertner2023a}, a simultaneous method linearizes \eqref{eq:nlp_focp} at $(\bar{\mathbf{x}}, \bar{\mathbf{u}})$ yielding:
\begingroup\abovedisplayskip=0.5pt \belowdisplayskip=1.5pt
\begin{mini!}|s|
{\scriptstyle{\mathbf{\delta x}, \mathbf{\delta u}}}
{\sum_{i=1}^{N}\begin{bmatrix}
q_i \\
r_i
\end{bmatrix}^{\top}\!\begin{bmatrix}
\delta x_i \\
\delta u_i
\end{bmatrix}+\frac{1}{2}\begin{bmatrix}
\delta x_i \\
\delta u_i
\end{bmatrix}^{\top}\!\begin{bmatrix}
Q_i & S_i^{\top} \\
S_i & R_i
\end{bmatrix}\begin{bmatrix}
\delta x_i \\
\delta u_i
\end{bmatrix}\nonumber}
{\label{eq:ocp_qp}}
{}
\breakObjective{+p_{N+1}^{\top} \delta x_{N+1}+\tfrac{1}{2} \delta x_{N+1}^{\top} P_{N+1} \delta x_{N+1}}
\addConstraint{\delta x_1}{= \delta u_0}
\addConstraint{\delta x_{i+1}}{=A_i \delta x_i+B_i \delta u_i +b_i,~i=1,\ldots,N,}
\end{mini!}
\endgroup
where $\delta x_i = x_i - \bar{x}_i$ for $i=1,\ldots,N+1$, $\delta u_i = u_i - \bar{u}_i$ for $i=0,\ldots,N$. The concatenation of all delta terms is denoted by $(\mathbf{\delta x}, \mathbf{\delta u})$. The dynamics terms are given by
\begingroup\abovedisplayskip=3pt \belowdisplayskip=3pt
\begin{align*}
A_i =\tfrac{\partial \phi_i}{\partial x_i}\left(\bar{x}_i, \bar{u}_i\right),~B_i=\tfrac{\partial \phi_i}{\partial u_i}\left(\bar{x}_i, \bar{u}_i\right),
\end{align*}
\endgroup
and $b_i =\phi_i\left(\bar{x}_i, \bar{u}_i\right) - \bar{x}_{i+1}$. The gradient terms in the objective function are defined by
$q_i =\nabla_{x_i} f_i(\bar{x}_i$, $\bar{u}_i), r_i =\nabla_{u_i} f_i(\bar{x}_i, \bar{u}_i)$, and $p_{N+1} =\nabla_{x_{N+1}} f_{N+1}(\bar{x}_{N+1})$.
The Hessian parts are defined by $P_{N+1}=H_{N+1}(\bar{x}_{N+1})$ and
\begingroup\abovedisplayskip=3pt \belowdisplayskip=3pt
\begin{align}
\label{eq:splitting_hessian}
    \begin{bmatrix}Q_i & S_i^{\top} \\ S_i & R_i\end{bmatrix}=H_i(\bar{x}_i, \bar{u}_i).
\end{align}
\endgroup
Defining
$R_0:= 0$, 
$r_0:= 0$, 
$B_0:= I$,
and
$D_{i}:= R_i + B_i^{\top}P_{i+1}B_i$,
$d_{i}:= r_i + B_i^{\top}(P_{i+1}b_i + p_{i+1})$, 
the QP \eqref{eq:ocp_qp} can be efficiently solved by a Riccati recursion algorithm. First, a backward sweep calculates the following matrices
\begingroup\abovedisplayskip=3pt \belowdisplayskip=3pt
\begin{align}
k_i & =-D_i^{-1}d_i, ~~ i=N, \ldots, 0, \label{eq:small_K_backward recursion}\\
K_i & =-D_i^{-1}\left(S_i+B_i^{\top} P_{i+1} A_i\right), \label{eq:big_K_backward recursion}\\
P_i &= Q_i+A_i^{\top} P_{i+1} A_i+\left(S_i^{\top}+A_i^{\top} P_{i+1} B_i\right) K_i,\\
p_i &= q_i+A_i^{\top}\left(P_{i+1} b_i+p_{i+1}\right)+K_i^{\top}d_i, \label{eq:small_P_backward_recursion}
\end{align}
\endgroup
for $i=N,\ldots,1$.
Then, a forward sweep calculates the solution of the above QP with
\begingroup\abovedisplayskip=3pt \belowdisplayskip=3pt
\begin{subequations}
\begin{align}
u_0 & =\bar{u}_0 + \alpha k_0, u_i =\bar{u}_i + \alpha k_i+K_i\left(x_i-\bar{x}_i\right), \label{eq:dms_control}\\
x_{i+1} & =A_i\left(x_i-\bar{x}_i\right)+B_i\left(u_i-\bar{u}_i\right) + b_i + \bar{x}_{i+1}, \label{eq:linearized_rollout}
\end{align}
\label{eq:all_dms_equations}
\end{subequations}
\endgroup
for $i=1, \ldots, N$ and $\alpha=1$. Instead of using the linearized dynamics \eqref{eq:linearized_rollout} in the forward rollout, DDP uses the nonlinear dynamics of \eqref{eq:nlp_focp} in the forward simulation.
This yields the nonlinear rollout
\begingroup\abovedisplayskip=3pt \belowdisplayskip=3pt
\begin{align}
x_1 = u_0, ~~x_{i+1} & =\phi_i(x_i, u_i). \label{eq:ddp_forward}
\end{align}
\endgroup
DDP requires $(\bar{\mathbf{x}}, \bar{\mathbf{u}})$ to be dynamically feasible. This includes the initial guess. Due to the nonlinear rollout, all subsequent iterates of DDP are also dynamically feasible.
 As shown in \cite{Mastalli2019}, the optimal objective $f_{\mathrm{QP}}^*(\bar{\mathbf{x}}, \bar{\mathbf{u}})$ of \eqref{eq:ocp_qp} can be efficiently calculated by
 \begingroup\abovedisplayskip=3pt \belowdisplayskip=4pt
\begin{align}
\label{eq:qp_optimal_sol}
    f_{\mathrm{QP}}^*(\bar{\mathbf{x}},\bar{\mathbf{u}})\! = \!\sum_{i=0}^N \tfrac{1}{2}k_i^{\top}D_i k_i  \!+\! k_i^{\top} \! d_i = \!-\!\!\sum_{i=0}^N \tfrac{1}{2} d_i^{\top}D_i^{\shortminus 1} d_i.
\end{align}
\endgroup
Finally, it is noted that the Riccati recursion is well-defined if $D_i$ is invertible. This can be guaranteed by choosing positive definite matrices $H_i$ \cite{Yakowitz1984}. The subsequent section illustrates how this is ensured by FP-DDP.
\subsection{Regularization}
\label{sec:regularization}
The GGN Hessian terms for $f_i,i=1\ldots,n+1$ are denoted by $H_i^{\mathrm{GGN}}$.
In \eqref{eq:splitting_hessian}, the GGN Hessian is combined with an adaptively reduced Levenberg-Marquardt regularization term taken from \cite{bergou2020}, i.e.,
\begingroup\abovedisplayskip=3pt \belowdisplayskip=3pt
\begin{align}
\label{eq:regularization}
    H^{(k)}_i = (H^{\mathrm{GGN}}_i)^{(k)} + \gamma^{(k)}I_i,
\end{align}
\endgroup
where $\gamma^{(k)}:=\mu^{(k)}f(\mathbf{x}^{(k)},\mathbf{u}^{(k)})$ for $\mu^{(k)}>0$ and $I_i$ denotes the identity matrix of appropriate size. 
If a full step is taken, the parameter $\mu$ is updated as $\mu^{(k+1)}\gets \max\{\mu_{\min}, \tfrac{\bar{\mu}}{\lambda}\}$ and $\bar{\mu}\gets\mu^{(k)}$, where $\mu_{\min}>0, \lambda>1$ are fixed parameters, $\mu^{(0)}>\mu_{\min}$, and $\bar{\mu}$ is initialized with $\mu^{(0)}$. If a smaller step is taken, then $\mu^{(k+1)}\gets \lambda\mu^{(k)}$. The regularization term vanishes as the iterates approach a feasible point, allowing for superlinear local convergence \cite{bergou2020}. Since $H^{\mathrm{GGN}}_i$ is always positive semi-definite and since the regularization term is positive as long as the algorithm has not converged, $H^{(k)}_i$ defined by \eqref{eq:regularization} is ensured to be positive definite.
If the step size $\alpha$ goes below a given minimum $\alpha_{\min}$ or if the backward Riccati sweep fails, the regularization parameter $\gamma$ is increased similarly to \cite{Mastalli2019}, by multiplying $\lambda$ with $\mu$ and restarting the DDP iteration with the updated regularization parameter.
Since a GGN Hessian approximation is used the proposed algorithm is free of Lagrange multipliers.
\subsection{Line Search and Step Acceptance}
\label{sec:our_algorithm}
Due to the use of DDP, globalization is only required for the objective function which is once differentiable due to the choice of the squared 2-norm.
A prominent feature of this choice is that the Maratos effect is avoided \cite{Nocedal2006}.
The backward Riccati recursion on problem \eqref{eq:ocp_qp} yields the gain matrices $K_i$ and the feedforward matrices $k_i$. The forward sweep, given by \eqref{eq:dms_control} and \eqref{eq:ddp_forward}, includes the line search parameter $\alpha\in(0,1]$.
An Armijo condition decides upon step acceptance. 
Therefore, the predicted reduction of the quadratic model of $f$ at iterate $(\mathbf{x}^{(k)}, \mathbf{u}^{(k)})$ using problem \eqref{eq:ocp_qp} and its solution $(\mathbf{\delta x}, \mathbf{\delta u})$ is defined by
\begin{equation}
    \resizebox{.87\hsize}{!}{$m_f^{(k)}\!(\mathbf{\delta x},\mathbf{\delta u})\!=\!\shortminus\!\left((\nabla f^{(k)})^{\top}\!\!+\!\frac{1}{2} (\mathbf{\delta x},\mathbf{\delta u})^{\top} H^{(k)}\right)(\mathbf{\delta x},\mathbf{\delta u})$}.
\end{equation}
Note that $m_f^{(k)}(\mathbf{\delta x}, \mathbf{\delta u})=-f^*_{\mathrm{QP}}(\mathbf{x}^{(k)}, \mathbf{u}^{(k)})$. Given the iterate $(\mathbf{x}^{(k)}, \mathbf{u}^{(k)})$, the trial iterate for a given $\alpha$ is denoted by $(\mathbf{\hat{x}}(\alpha), \mathbf{\hat{u}}(\alpha))$.
The Armijo condition is defined by 
\begingroup\abovedisplayskip=3pt \belowdisplayskip=3pt
\begin{align}
\label{eq:sufficient_decrease}
    f^{(k)} - f(\mathbf{\hat{x}}(\alpha), \mathbf{\hat{u}}(\alpha)) \geq \eta\alpha m_f^{(k)}(\mathbf{\delta x}, \mathbf{\delta u})
\end{align}
\endgroup
for $\eta\in(0,1)$.
If the current step size is not acceptable for \eqref{eq:sufficient_decrease}, the step size is reduced by a factor $\tfrac{1}{2}$, i.e., $\alpha\gets \tfrac{1}{2}\alpha$.
\subsection{Initialization and Termination}
As mentioned in Section \ref{sec:ddp}, DDP requires a dynamically feasible initialization. 
In case such a guess is not provided, prior to starting FP-DDP, a Riccati backward sweep \eqref{eq:small_K_backward recursion} - \eqref{eq:small_P_backward_recursion} is performed based on the infeasible initial guess and a nonlinear forward simulation \eqref{eq:dms_control} and \eqref{eq:ddp_forward} yields the dynamically feasible initialization.
The algorithm stops, if a feasible trajectory is found, i.e., the objective function is below a given tolerance $F(\mathbf{u})\leq\varepsilon_{\mathrm{F}}$ or if a stationary point was found, i.e., $\Vert\nabla F(\mathbf{u})\Vert_{\infty}\leq \varepsilon_{\mathrm{S}}$.
\begingroup\abovedisplayskip=3pt \belowdisplayskip=3pt
\begin{algorithm}[thpb]
\caption{Feasibility Projection (FP)-DDP}
\label{alg:ffp}
\SetKwInOut{Parameter}{Parameter}
\Parameter{$\eta,\alpha_{\min},\varepsilon_{\mathrm{F}},\varepsilon_{\mathrm{S}},\mu_{\min}\in(0,1), \lambda > 0;$}
\KwIn{Initial guess  $(\mathbf{\hat{x}}^{(0)}, \mathbf{\hat{u}}^{(0)})$, $\mu^{(0)}$}
Find dynamically feasible initial guess $(\mathbf{x}^{(0)}, \mathbf{u}^{(0)})$\;
\For{$k=0, 1, 2, \ldots$}{
\lIf{$F(\mathbf{u})\leq\varepsilon_{\mathrm{F}}$ or $\Vert\nabla F(\mathbf{u})\Vert_{\infty}\leq\varepsilon_{\mathrm{\mathrm{S}}}$}{\textbf{break}}
Setup QP \eqref{eq:ocp_qp} and calculate $K_i, k_i$ from \eqref{eq:big_K_backward recursion}, \eqref{eq:small_K_backward recursion}\;
$\alpha^{(k)}\gets 1.0$, calculate $m_f^{(k)}(\mathbf{\delta x}, \mathbf{\delta u})$ with \eqref{eq:qp_optimal_sol}\;
\While{$\mathrm{True}$}{
\lIf{$\alpha^{(k)} <\alpha_{\min}$}{
$\mu^{(k+1)}\gets\lambda\mu^{(k)}$;
\textbf{break}
}
Calculate $(\mathbf{\hat{x}}^{(k)}(\alpha), \mathbf{\hat{u}}^{(k)}(\alpha))$ from \eqref{eq:dms_control},  \eqref{eq:ddp_forward}\;
\uIf{Sufficient decrease \eqref{eq:sufficient_decrease} is satisfied}{
$(\mathbf{x}^{(k+1)}, \mathbf{u}^{(k+1)})\gets(\mathbf{\hat{x}}^{(k)}(\alpha), \mathbf{\hat{u}}^{(k)}(\alpha))$\;
Set $\mu^{(k+1)}$ depending on $\alpha^{(k)}$;
\textbf{break}
}
\lElse{$\alpha^{(k)}\gets\frac{1}{2}\alpha^{(k)}$}
}
}
\textbf{return} $(\mathbf{x}^{(k)}, \mathbf{u}^{(k)})$
\end{algorithm}
\endgroup
\section{Convergence Analysis}
This section presents a global convergence result for FP-DDP.
Local convergence follows from the results in \cite{Baumgaertner2023a} and \cite{bergou2020}. 
Firstly, several auxiliary lemmas are introduced.
\begingroup\abovedisplayskip=0.0pt \belowdisplayskip=0.0pt
\begin{lemma}
\label{lem:characterization_optimality}
Let a trajectory $(\bar{\mathbf{x}},\bar{\mathbf{u}})$ be given that satisfies the dynamics \eqref{eq:focp_dynamics_constraint}. Then, the following statements hold:
\begin{enumerate}
    \item[(i)] If $(\bar{\mathbf{x}},\bar{\mathbf{u}})$ is not optimal for \eqref{eq:nlp_focp}, then the Hessian matrix constructed by Algorithm \ref{alg:ffp} is positive definite.
    \item[(ii)] The optimal solution of \eqref{eq:ocp_qp} is $(\mathbf{\delta x}, \mathbf{\delta u}) = 0$ if and only if $(\mathbf{\bar{x}}, \mathbf{\bar{u}})$ is a KKT point of \eqref{eq:nlp_focp}.
    \item[(iii)] The predicted reduction of \eqref{eq:ocp_qp} is zero, $m_f(\mathbf{\delta x}, \mathbf{\delta u}) = 0$, iff $(\mathbf{\delta x}, \mathbf{\delta u}) = 0$. Otherwise $m_f(\mathbf{\delta x}, \mathbf{\delta u}) > 0$.
\end{enumerate}
\end{lemma}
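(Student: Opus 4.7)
The plan is to dispatch the three statements in order, leveraging three structural facts: (a) each GGN block $H_i^{\mathrm{GGN}}$ is positive semi-definite because $f_i$ is a squared $2$-norm of a continuous residual; (b) the Levenberg--Marquardt shift $\gamma I$ from \eqref{eq:regularization} is strictly positive whenever $f(\bar{\mathbf{x}},\bar{\mathbf{u}})>0$; and (c) the assumed dynamic feasibility of $(\bar{\mathbf{x}},\bar{\mathbf{u}})$ gives $b_i = 0$ in \eqref{eq:ocp_qp}, so $(\delta x, \delta u)=0$ is always a QP-feasible point with objective value zero.

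For (i), I would first argue that non-optimality of $(\bar{\mathbf{x}},\bar{\mathbf{u}})$ forces $f(\bar{\mathbf{x}},\bar{\mathbf{u}})>0$: since $f$ is a sum of squared norms it is globally non-negative, hence any trajectory at which it vanishes is automatically a global minimizer of $f$ over any set containing it, and in particular over the dynamically feasible set. Combined with $H_i^{\mathrm{GGN}}\succeq 0$, the strictly positive shift $\gamma I_i = \mu f(\bar{\mathbf{x}},\bar{\mathbf{u}}) I_i$ makes each $H_i^{(k)}$ in \eqref{eq:regularization} positive definite, and thus so is the block-diagonal Hessian assembled by Algorithm \ref{alg:ffp}.

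For (ii), I would write down the stationarity conditions of the FOCP and of the QP and compare them term by term. By construction the gradients $q_i, r_i, p_{N+1}$ and the Jacobians $A_i, B_i$ are the exact first-order data of \eqref{eq:nlp_focp} at $(\bar{\mathbf{x}},\bar{\mathbf{u}})$, and $b_i = 0$ by dynamic feasibility; therefore the stationarity equations of the QP evaluated at $(\delta x, \delta u)=0$ with multipliers $\lambda_i^{\mathrm{QP}}$ attached to \eqref{eq:focp_dynamics_constraint} coincide exactly with the stationarity conditions of \eqref{eq:nlp_focp} at $(\bar{\mathbf{x}},\bar{\mathbf{u}})$ under the identification $\lambda_i = \lambda_i^{\mathrm{QP}}$. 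The forward direction follows from first-order necessary conditions for the QP; the reverse direction uses that a KKT point of a QP whose Hessian is at least PSD (guaranteed by $H_i^{\mathrm{GGN}} \succeq 0$, even if the regularization happens to vanish) is automatically a global minimum.

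For (iii), I would start from $m_f(\delta x,\delta u) = -f^*_{\mathrm{QP}}(\bar{\mathbf{x}},\bar{\mathbf{u}})$ and the closed form \eqref{eq:qp_optimal_sol} to obtain
\begin{align*}
m_f(\delta x, \delta u) \;=\; \sum_{i=0}^{N} \tfrac{1}{2}\, d_i^{\top} D_i^{-1} d_i .
\end{align*}
By (i) each $D_i = R_i + B_i^{\top} P_{i+1} B_i$ inherits positive definiteness through the backward recursion, so every summand is non-negative and $m_f = 0$ iff $d_i = 0$ for all $i$; by \eqref{eq:small_K_backward recursion} this is equivalent to $k_i = 0$, and propagating $k_i = 0$ together with $b_i = 0$ through the forward sweep \eqref{eq:dms_control}--\eqref{eq:linearized_rollout} then yields $(\delta x, \delta u) = 0$, the converse being immediate from the definition of $m_f$. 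The step I expect to be the main obstacle is the borderline case $f(\bar{\mathbf{x}},\bar{\mathbf{u}}) = 0$, in which the regularization shift vanishes and $D_i$ is only guaranteed to be PSD, so the inverse in \eqref{eq:qp_optimal_sol} is not well defined; the cleanest remedy is to note that by (ii) $(\delta x, \delta u) = 0$ is then already a QP optimum, so both sides of the equivalence in (iii) are trivially satisfied.
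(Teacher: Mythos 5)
Your proof is correct and, for parts (i) and (ii), follows essentially the same reasoning as the paper: non-optimality forces $f(\bar{\mathbf{x}},\bar{\mathbf{u}})>0$, hence a strictly positive Levenberg--Marquardt shift on top of the positive semi-definite GGN blocks, and (ii) is obtained by matching the KKT systems of \eqref{eq:nlp_focp} and \eqref{eq:ocp_qp} at $(\mathbf{\delta x},\mathbf{\delta u})=0$ using $b_i=0$. For (iii) you take a genuinely different, more computational route: the paper argues abstractly that $(\mathbf{\delta x},\mathbf{\delta u})=0$ is QP-feasible with objective value $0$ and that positive definiteness of the Hessian makes the minimizer unique, so a nonzero minimizer must yield $f^*_{\mathrm{QP}}<0$; you instead read off $m_f=\sum_{i=0}^{N}\tfrac{1}{2}d_i^{\top}D_i^{-1}d_i$ from \eqref{eq:qp_optimal_sol} and argue termwise. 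Your route needs the additional (true and standard) fact that $D_i\succ 0$ propagates through the backward Riccati recursion when all stage Hessians $H_i$ are positive definite --- note in particular that $D_0=P_1$ since $R_0=0$ and $B_0=I$, so you really do need $P_1\succ0$ and not just invertibility --- but it buys an explicit characterization ($m_f=0$ iff all $d_i$, equivalently all $k_i$, vanish, which you then push through the forward sweep to get $(\mathbf{\delta x},\mathbf{\delta u})=0$), and it forces you to confront the degenerate case $f(\bar{\mathbf{x}},\bar{\mathbf{u}})=0$ where the regularization vanishes and $D_i$ may be singular; the paper silently leans on the non-optimality hypothesis of (i) at that point, so your explicit treatment of the borderline case is a genuine improvement in rigor. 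Both arguments are sound.
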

\endgroup
\begin{proof}
Since $(\bar{\mathbf{x}},\bar{\mathbf{u}})$ is not optimal, $f(\bar{\mathbf{x}},\bar{\mathbf{u}})$ is non-zero, i.e., the regularization term in \eqref{eq:regularization} is positive. Thus, referring to Sec.~\ref{sec:regularization} yields (i).
Result (ii) directly follows from comparing the KKT conditions of \eqref{eq:nlp_focp} and \eqref{eq:ocp_qp} with $(\mathbf{\delta x}, \mathbf{\delta u}) \!=\! 0$. 
For (iii): Since all iterates of Algorithm \ref{alg:ffp} are dynamically feasible, QP \eqref{eq:ocp_qp} has a feasible direction with $(\mathbf{\delta x}, \mathbf{\delta u}) = 0$ for which the objective function value of \eqref{eq:ocp_qp} is $0$.
The positive definiteness of the Hessian implies that there exists a unique solution to \eqref{eq:ocp_qp}.
Recalling that $m_f((\mathbf{\delta x}, \mathbf{\delta u}))=-f_{\mathrm{QP}}^*(\bar{\mathbf{x}},\bar{\mathbf{u}})$ shows that $m_f((\mathbf{\delta x}, \mathbf{\delta u})) = 0$ if $(\mathbf{\delta x}, \mathbf{\delta u}) = 0$ and otherwise $m_f((\mathbf{\delta x}, \mathbf{\delta u})) > 0$.
\end{proof}
\begingroup\abovedisplayskip=.5pt \belowdisplayskip=.5pt
\begin{lemma}
\label{lemma:ddp_estimate}
For the DDP step \eqref{eq:dms_control}, \eqref{eq:ddp_forward} it holds that
\begingroup\abovedisplayskip=1pt \belowdisplayskip=1pt
\begin{align*}
    (\hat{\mathbf{x}}(\alpha),\hat{\mathbf{u}}(\alpha)) - (\bar{\mathbf{x}},\bar{\mathbf{u}}) = \alpha (\mathbf{\delta x}, \mathbf{\delta u}) + \mathcal{O}(\alpha^2).
\end{align*}
\endgroup
\end{lemma}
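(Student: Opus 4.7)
The plan is to proceed by forward induction on the stage index $i$, tracking the deviation between the DDP trial iterate and the linear QP direction. The key inputs are: (a) $(\bar{\mathbf{x}},\bar{\mathbf{u}})$ is dynamically feasible, which is a maintained invariant of Algorithm~\ref{alg:ffp}, so $b_i = \phi_i(\bar{x}_i,\bar{u}_i)-\bar{x}_{i+1} = 0$ and $\bar{x}_1 = \bar{u}_0$; (b) the closed-form QP solution reads $\delta x_1 = \delta u_0 = k_0$, $\delta u_i = k_i + K_i\delta x_i$, and $\delta x_{i+1} = A_i\delta x_i + B_i\delta u_i$ (since $b_i=0$); (c) the DDP trial iterate satisfies $\hat u_0(\alpha)=\bar u_0+\alpha k_0$, $\hat u_i(\alpha)=\bar u_i+\alpha k_i+K_i(\hat x_i(\alpha)-\bar x_i)$, $\hat x_1(\alpha)=\hat u_0(\alpha)$, and $\hat x_{i+1}(\alpha)=\phi_i(\hat x_i(\alpha),\hat u_i(\alpha))$; (d) $\phi_i$ is assumed $C^2$, so on a bounded neighborhood of $(\bar x_i,\bar u_i)$ the standard Taylor remainder bound $\phi_i(\bar x_i+\Delta x,\bar u_i+\Delta u) = \phi_i(\bar x_i,\bar u_i) + A_i\Delta x + B_i\Delta u + \mathcal{O}(\|\Delta x\|^2+\|\Delta u\|^2)$ applies.

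For the base case $i=1$, I would compute directly: $\hat x_1(\alpha)-\bar x_1 = \hat u_0(\alpha)-\bar u_0 = \alpha k_0 = \alpha\delta x_1 = \alpha\delta u_0$, so both hold with zero remainder.

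For the inductive step, suppose $\hat x_i(\alpha)-\bar x_i = \alpha\delta x_i + \mathcal{O}(\alpha^2)$. Plugging into the feedback law for $\hat u_i$ and using linearity of $K_i$ gives
\begin{align*}
\hat u_i(\alpha) - \bar u_i
= \alpha k_i + K_i\bigl(\alpha\delta x_i + \mathcal{O}(\alpha^2)\bigr)
= \alpha(k_i + K_i\delta x_i) + \mathcal{O}(\alpha^2)
= \alpha\delta u_i + \mathcal{O}(\alpha^2).
\end{align*}
Applying the $C^2$ Taylor expansion of $\phi_i$ at $(\bar x_i,\bar u_i)$ and using $\phi_i(\bar x_i,\bar u_i)=\bar x_{i+1}$ then yields
\begin{align*}
\hat x_{i+1}(\alpha) - \bar x_{i+1}
= A_i(\hat x_i(\alpha)-\bar x_i) + B_i(\hat u_i(\alpha)-\bar u_i) + \mathcal{O}(\alpha^2)
= \alpha(A_i\delta x_i + B_i\delta u_i) + \mathcal{O}(\alpha^2)
= \alpha\delta x_{i+1} + \mathcal{O}(\alpha^2),
\end{align*}
completing the induction. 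Concatenating the stagewise estimates gives the claimed bound for $(\hat{\mathbf{x}}(\alpha),\hat{\mathbf{u}}(\alpha))-(\bar{\mathbf{x}},\bar{\mathbf{u}})$.

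The only delicate point is justifying that the hidden constants in the $\mathcal{O}(\alpha^2)$ terms can be chosen uniformly in $\alpha$ on the line-search interval $(0,1]$. This is harmless because $N$ is fixed and finite, so the compounding of constants through $K_i$, $A_i$, $B_i$ and the Taylor remainders of $\phi_i$ happens only finitely many times; for $\alpha$ small enough, $\hat x_i(\alpha),\hat u_i(\alpha)$ remain in a bounded neighborhood of $(\bar x_i,\bar u_i)$ where the Hessian bound on $\phi_i$ is uniform, which is all that is needed to close the induction.
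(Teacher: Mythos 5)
Your proof is correct and follows essentially the same route as the paper, whose proof is a one-sentence sketch of exactly this argument: induction over the stages, matching the first-order term in $\alpha$ of the nonlinear rollout against the linearized QP solution from \eqref{eq:all_dms_equations}. Your version simply fills in the details (using $b_i=0$ from dynamic feasibility and the Taylor remainder of $\phi_i$), which the paper leaves implicit.
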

\endgroup
\begin{proof}
    The lemma can be proved by induction and comparing the derivative of $(\hat{\mathbf{x}}(\alpha),\hat{\mathbf{u}}(\alpha))$ at $\alpha$ with $(\mathbf{\delta x}, \mathbf{\delta u})$ obtained from \eqref{eq:all_dms_equations}.
\end{proof}
\begin{lemma}[from \cite{Yakowitz1984}]
\label{lemma:objective_ddp_estimate}
For the objective function $F$ with eliminated states, it holds
\begingroup\abovedisplayskip=1pt \belowdisplayskip=1pt
\begin{align*}
F(\hat{\mathbf{u}}(\alpha)) - F(\bar{\mathbf{u}}) = \alpha f^*_{\mathrm{QP}}(\bar{\mathbf{x}},\bar{\mathbf{u}}) + \mathcal{O}(\alpha^2).
\end{align*}
\endgroup
\end{lemma}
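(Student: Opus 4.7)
The plan is to combine a first-order Taylor expansion of $F$ around $\bar{\mathbf{u}}$ with the DDP step estimate of Lemma \ref{lemma:ddp_estimate}, and then to match the resulting linear-in-$\alpha$ term with the efficient expression \eqref{eq:qp_optimal_sol} for $f^*_{\mathrm{QP}}$.

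Since $f_i$ is built from $\tfrac{1}{2}\|[\cdot]^+\|_2^2$ composed with the smooth maps $\phi_i$ and $c_i$, the eliminated cost $F$ is once continuously differentiable. The first step is therefore to write
\begin{equation*}
F(\hat{\mathbf{u}}(\alpha)) - F(\bar{\mathbf{u}}) = \nabla F(\bar{\mathbf{u}})^{\top}(\hat{\mathbf{u}}(\alpha)-\bar{\mathbf{u}}) + \mathcal{O}(\|\hat{\mathbf{u}}(\alpha)-\bar{\mathbf{u}}\|^2).
\end{equation*}
Substituting the estimate $\hat{\mathbf{u}}(\alpha)-\bar{\mathbf{u}}=\alpha\mathbf{\delta u}+\mathcal{O}(\alpha^2)$ from Lemma \ref{lemma:ddp_estimate} absorbs the quadratic remainder into $\mathcal{O}(\alpha^2)$, leaving
\begin{equation*}
F(\hat{\mathbf{u}}(\alpha)) - F(\bar{\mathbf{u}}) = \alpha\,\nabla F(\bar{\mathbf{u}})^{\top}\mathbf{\delta u} + \mathcal{O}(\alpha^2).
\end{equation*}

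What remains is to identify the directional derivative $\nabla F(\bar{\mathbf{u}})^{\top}\mathbf{\delta u}$ with $f^*_{\mathrm{QP}}(\bar{\mathbf{x}},\bar{\mathbf{u}})$. The key observation is that the iterate is dynamically feasible, so $b_i=0$ and the linearized dynamics in \eqref{eq:ocp_qp} allow $\mathbf{\delta x}$ to be eliminated, reducing \eqref{eq:ocp_qp} to an unconstrained quadratic in $\mathbf{\delta u}$ whose gradient at $\mathbf{\delta u}=0$ coincides with $\nabla F(\bar{\mathbf{u}})$ by the chain rule, and whose Hessian is positive definite by Lemma \ref{lem:characterization_optimality}(i). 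Unrolling the Riccati backward sweep \eqref{eq:small_K_backward recursion}--\eqref{eq:small_P_backward_recursion} stagewise then lets this inner product be re-expressed through the stagewise quantities $(k_i,d_i,D_i)$, at which point \eqref{eq:qp_optimal_sol} closes the identification.

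The main obstacle is expected to be precisely this final identification step: matching the reduced gradient against the Riccati quantities stage by stage requires careful bookkeeping of the feedback gains $K_i$ and the back-propagated terms $p_i$ and $P_i$, together with an appeal to the stationarity condition of the reduced QP to eliminate the Hessian term. The Taylor expansion and the invocation of Lemma \ref{lemma:ddp_estimate} are otherwise routine.
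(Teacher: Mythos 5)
First, a framing note: the paper does not actually prove this lemma --- it is imported verbatim from \cite{Yakowitz1984} --- so there is no in-paper argument to compare yours against, and your proposal has to stand on its own. Its first two steps do stand: the Taylor expansion (valid with an $\mathcal{O}(\Vert\cdot\Vert^2)$ remainder because $\nabla F$ is locally Lipschitz, $[\cdot]^+$ being Lipschitz --- mere continuous differentiability of $F$ would only give $o(\Vert\cdot\Vert)$) and the substitution of Lemma~\ref{lemma:ddp_estimate} correctly reduce the claim to the identity $\nabla F(\bar{\mathbf{u}})^{\top}\mathbf{\delta u} = f^*_{\mathrm{QP}}(\bar{\mathbf{x}},\bar{\mathbf{u}})$.

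That identity is where the argument breaks. Write the reduced QP as $\min_{\mathbf{\delta u}}\, g^{\top}\mathbf{\delta u}+\tfrac12\mathbf{\delta u}^{\top}\tilde{H}\mathbf{\delta u}$ with $g=\nabla F(\bar{\mathbf{u}})$, exactly as you set it up. The stationarity condition $\tilde{H}\mathbf{\delta u}=-g$ that you invoke to eliminate the Hessian term gives $\tfrac12\mathbf{\delta u}^{\top}\tilde{H}\mathbf{\delta u}=-\tfrac12 g^{\top}\mathbf{\delta u}$, hence $f^*_{\mathrm{QP}}=\tfrac12\,g^{\top}\mathbf{\delta u}$: the directional derivative is \emph{twice} the optimal QP value, not equal to it. The same factor appears stagewise: unrolling the Riccati recursion yields $\nabla F(\bar{\mathbf{u}})^{\top}\mathbf{\delta u}=\sum_{i=0}^{N}k_i^{\top}d_i=-\sum_{i=0}^{N}d_i^{\top}D_i^{-1}d_i$, whereas \eqref{eq:qp_optimal_sol} carries the additional $\tfrac12 k_i^{\top}D_ik_i$ terms and equals $-\tfrac12\sum_{i}d_i^{\top}D_i^{-1}d_i$. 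So your plan, carried out faithfully, proves $F(\hat{\mathbf{u}}(\alpha))-F(\bar{\mathbf{u}})=2\alpha f^*_{\mathrm{QP}}(\bar{\mathbf{x}},\bar{\mathbf{u}})+\mathcal{O}(\alpha^2)$, and the two coefficients coincide only when $f^*_{\mathrm{QP}}=0$. Since $f^*_{\mathrm{QP}}\leq 0$, your argument does deliver the one-sided bound $F(\hat{\mathbf{u}}(\alpha))-F(\bar{\mathbf{u}})\leq\alpha f^*_{\mathrm{QP}}+\mathcal{O}(\alpha^2)$, which is all that Lemma~\ref{lemma:sufficiend_decrease} consumes downstream; but as a proof of the stated equality the final identification does not close, and the gap is precisely the factor of two that your own appeal to stationarity produces.
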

From here, sufficient decrease and global convergence can be derived.
\begin{lemma}
\label{lemma:sufficiend_decrease}
Let a dynamically feasible trajectory $(\bar{\mathbf{x}},\bar{\mathbf{u}})$ be given that is not optimal and let $\eta\in(0,1)$.
Then there exists an $\alpha\in(0,1]$ such that the sufficient decrease condition \eqref{eq:sufficient_decrease} is satisfied.
\end{lemma}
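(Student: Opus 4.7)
The plan is to reduce the Armijo condition \eqref{eq:sufficient_decrease} to a first-order inequality via Lemma~\ref{lemma:objective_ddp_estimate}, and then to absorb the $\mathcal{O}(\alpha^2)$ remainder by taking $\alpha$ small, dispatching separately the degenerate case in which the QP step vanishes.

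First I would apply Lemma~\ref{lem:characterization_optimality}(i) to conclude that at the non-optimal iterate $(\bar{\mathbf{x}}, \bar{\mathbf{u}})$ the Hessian assembled by Algorithm~\ref{alg:ffp} is positive definite, so \eqref{eq:ocp_qp} admits a unique minimizer $(\mathbf{\delta x}, \mathbf{\delta u})$, and by Lemma~\ref{lem:characterization_optimality}(iii) the predicted reduction satisfies $m_f^{(k)} \geq 0$ with equality exactly when $(\mathbf{\delta x}, \mathbf{\delta u}) = 0$. I would then split on this equality. In the degenerate subcase $(\mathbf{\delta x}, \mathbf{\delta u}) = 0$, the feedforward gains are $k_i = 0$, and unrolling the DDP forward pass \eqref{eq:dms_control}, \eqref{eq:ddp_forward} together with the dynamic feasibility of $(\bar{\mathbf{x}}, \bar{\mathbf{u}})$ (so that $\bar{x}_1 = \bar{u}_0$ and $\bar{x}_{i+1} = \phi_i(\bar{x}_i, \bar{u}_i)$) gives inductively $(\hat{\mathbf{x}}(\alpha), \hat{\mathbf{u}}(\alpha)) = (\bar{\mathbf{x}}, \bar{\mathbf{u}})$ for every $\alpha$; condition \eqref{eq:sufficient_decrease} then collapses to $0 \geq 0$ and is satisfied trivially with, e.g., $\alpha = 1$.

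In the generic subcase $m_f^{(k)} > 0$, I would invoke Lemma~\ref{lemma:objective_ddp_estimate} and use $m_f^{(k)} = -f^{*}_{\mathrm{QP}}(\bar{\mathbf{x}}, \bar{\mathbf{u}})$ and $F(\hat{\mathbf{u}}(\alpha)) = f(\hat{\mathbf{x}}(\alpha), \hat{\mathbf{u}}(\alpha))$ to rewrite the actual decrease as
$$f^{(k)} - f(\hat{\mathbf{x}}(\alpha), \hat{\mathbf{u}}(\alpha)) = \alpha\, m_f^{(k)} + \mathcal{O}(\alpha^2).$$
Substituting this into \eqref{eq:sufficient_decrease} and dividing by $\alpha > 0$ will reduce the Armijo requirement to $(1-\eta)\, m_f^{(k)} \geq \mathcal{O}(\alpha)$, whose left-hand side is a strictly positive constant and whose right-hand side vanishes with $\alpha$. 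Choosing any $\alpha \in (0,1]$ small enough that the remainder is dominated by $(1-\eta)\, m_f^{(k)}$ then concludes the argument.

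The main point that needs care is the case analysis rather than any computation: the hypothesis merely rules out optimality, not KKT-ness, so I have to verify that a vanishing QP direction propagates to a vanishing DDP update before the classical Taylor-type argument on the generic case can be applied. Everything else is a routine Armijo-style estimate in which Lemma~\ref{lemma:objective_ddp_estimate} plays the role normally played by a directional-derivative expansion.
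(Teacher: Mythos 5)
Your proposal is correct and follows essentially the same route as the paper: invoke Lemma~\ref{lemma:objective_ddp_estimate} to write the actual decrease as $\alpha\, m_f^{(k)} + \mathcal{O}(\alpha^2)$ and then choose $\alpha$ small enough that the quadratic remainder is dominated by $(1-\eta)\alpha\, m_f^{(k)}$, which is exactly the paper's bound $\alpha \leq (1-\eta)m_f(\mathbf{\delta x},\mathbf{\delta u})/\bar{\xi}$. Your explicit treatment of the degenerate subcase $(\mathbf{\delta x},\mathbf{\delta u})=0$ (where $m_f^{(k)}=0$ and the paper's choice of $\alpha$ would collapse to zero) is a small extra care the paper omits, since its hypothesis of non-optimality is implicitly read as non-stationarity, but it does not change the argument.
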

\begin{proof}
Let the trial iterate be denoted by $(\hat{\mathbf{x}}(\alpha),\hat{\mathbf{u}}(\alpha))$. Noting that $F(\hat{\mathbf{u}}(\alpha)) = f(\hat{\mathbf{x}}(\alpha),\hat{\mathbf{u}}(\alpha))$, $F(\bar{\mathbf{u}}) = f(\bar{\mathbf{x}},\bar{\mathbf{u}})$, and $-f^*_{\mathrm{QP}}(\bar{\mathbf{x}},\bar{\mathbf{u}}) = m_f(\mathbf{\delta x}, \mathbf{\delta u})$ and using Lemma \ref{lemma:objective_ddp_estimate}, it follows 
$f(\bar{\mathbf{x}},\bar{\mathbf{u}}) - f(\hat{\mathbf{x}}(\alpha),\hat{\mathbf{u}}(\alpha)) \geq \alpha m_f(\mathbf{\delta x}, \mathbf{\delta u}) - \bar{\xi} \alpha^2$
for $\bar{\xi}>0$, i.e., $\alpha\leq((1-\eta)m_f(\mathbf{\delta x}, \mathbf{\delta u}))/\bar{\xi}$ yields the claim.
\end{proof}
\begin{theorem}[Global Convergence]
\label{theorem:global_convergence}
Suppose that $\nabla f$ is Lipschitz continuous and that $H^{(k)}$ is uniformly bounded for all $k$.
Then, Algorithm~\ref{alg:ffp} creates a sequence of iterates $\{(\mathbf{x}^{(k)}, \mathbf{u}^{(k)})\}_{k\in\mathbb{N}}$ and 
any accumulation point $(\mathbf{x}^*, \mathbf{u}^*)$ of the sequence is a stationary point for \eqref{eq:nlp_focp}.
\end{theorem}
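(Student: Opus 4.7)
The plan is to follow the standard descent-method global-convergence template: establish monotone decrease via Armijo, obtain a uniform lower bound on the accepted step size in terms of the model decrease $m_f^{(k)}$, telescope, and pass to the limit using continuity of $m_f$. I would first verify well-definedness: Lemma~\ref{lem:characterization_optimality}(i) ensures the Riccati recursion is solvable at every non-stationary iterate, and Lemma~\ref{lemma:sufficiend_decrease} ensures Armijo is satisfiable there. The only way the inner loop can exit without updating the iterate is the $\alpha^{(k)} < \alpha_{\min}$ branch, which only inflates $\mu^{(k)}$; since $\|H^{(k)}\| \geq \mu^{(k)} f^{(k)}$, uniform boundedness of $H^{(k)}$ forbids this branch from firing indefinitely at iterates with $f^{(k)}$ bounded away from zero. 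Hence either $f^{(k)} \to 0$ (and every accumulation point is feasible for \eqref{eq:nlp_ocp}, hence stationary for \eqref{eq:nlp_focp}), or infinitely many Armijo steps are accepted; in the latter case monotone decrease together with $f \geq 0$ gives the telescoping estimate $\sum_k \alpha^{(k)} m_f^{(k)} \leq (f^{(0)} - f^\star)/\eta$.

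Next I would upgrade Lemma~\ref{lemma:objective_ddp_estimate} to the quantitative descent inequality $F(\hat{\mathbf{u}}(\alpha)) - F(\bar{\mathbf{u}}) \leq -\alpha m_f^{(k)} + \bar{\xi}\alpha^2$ with $\bar{\xi}$ uniform along any subsequence where $f^{(k)}$ is bounded away from zero. Lipschitz continuity of $\nabla f$, the upper bound on $H^{(k)}$, and the lower bound $H^{(k)} \succeq \mu_{\min} f^{(k)} I$ jointly control the Riccati feedback gains $K_i$ and the feedforward terms $k_i$, and hence bound the second-order remainder in the nonlinear rollout underlying Lemma~\ref{lemma:ddp_estimate}. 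With this uniform $\bar{\xi}$, Armijo holds whenever $\alpha \leq (1-\eta) m_f^{(k)}/\bar{\xi}$, so backtracking guarantees $\alpha^{(k)} \geq \min\{1,\,(1-\eta) m_f^{(k)}/(2\bar{\xi})\}$. Substituting into the telescoped Armijo sum yields $\sum_k \min\{m_f^{(k)},\,(m_f^{(k)})^2\} < \infty$, and hence $m_f^{(k)} \to 0$.

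Finally, for any accumulation point $(\mathbf{x}^\star, \mathbf{u}^\star)$ with subsequence $(\mathbf{x}^{(k_j)}, \mathbf{u}^{(k_j)}) \to (\mathbf{x}^\star, \mathbf{u}^\star)$: the case $f^\star = 0$ makes $(\mathbf{x}^\star, \mathbf{u}^\star)$ feasible for \eqref{eq:nlp_ocp} and thus trivially a KKT point of \eqref{eq:nlp_focp}. If $f^\star > 0$, then $H^{(k_j)} \succeq \mu_{\min} f^\star I$ is uniformly positive definite along the subsequence, so the QP data map and the map from iterate to the $m_f$-value are continuous at $(\mathbf{x}^\star, \mathbf{u}^\star)$; passing to the limit in $m_f^{(k_j)} \to 0$ gives $m_f(\mathbf{x}^\star, \mathbf{u}^\star) = 0$, and Lemma~\ref{lem:characterization_optimality}(ii)--(iii) then identifies $(\mathbf{x}^\star, \mathbf{u}^\star)$ as a KKT point of \eqref{eq:nlp_focp}.

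The main obstacle is the uniform constant $\bar{\xi}$ in the second step. Because DDP replaces the linear forward sweep with a nonlinear one, the second-order remainder in Lemma~\ref{lemma:ddp_estimate} couples the second derivatives of the dynamics $\phi_i$ with products of the Riccati gains, and the latter are only controlled when the regularised Hessian is bounded from below — which is precisely why the argument must be carried out on the subsequence where $f^{(k)}$ stays bounded away from zero and where the construction $\gamma^{(k)} = \mu^{(k)} f^{(k)}$ supplies a usable uniform lower bound.
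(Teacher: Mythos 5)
Your proposal is correct and follows essentially the same route as the paper: telescope the Armijo decrease to get $\sum_k \alpha^{(k)} m_f^{(k)} < \infty$, use the Lipschitz/bounded-Hessian quadratic upper bound on $F$ along the DDP step (Lemmas~\ref{lemma:ddp_estimate} and \ref{lemma:objective_ddp_estimate}) to bound the backtracked step size below by a multiple of $m_f^{(k)}$, conclude $m_f^{(k)} \to 0$, and invoke Lemma~\ref{lem:characterization_optimality} at accumulation points. The only differences are presentational --- you argue directly where the paper argues by contradiction --- and that you additionally treat the $\alpha_{\min}$ regularization-restart branch and the limit passage at accumulation points, which the paper leaves implicit.
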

\begin{proof}
From Lemma \ref{lemma:sufficiend_decrease} follows that Algorithm \ref{alg:ffp} is well-defined, i.e., a sequence $\{(\mathbf{x}^{(k)}, \mathbf{u}^{(k)})\}_{k\in\mathbb{N}}$ is found and that the sufficient decrease condition \eqref{eq:sufficient_decrease} is satisfied in every iteration $k$.
Summing for $k=0$ to $N$ and using a telescopic sum argument yields
\begingroup\abovedisplayskip=3pt \belowdisplayskip=3pt
\begin{align*}
    \resizebox{.9\hsize}{!}{$\eta \sum_{k=0}^N \alpha^{(k)} m_f^{(k)}(\mathbf{\delta x}^{(k)}, \mathbf{\delta u}^{(k)})\leq f^{(0)} - f^{(N)} \leq f^{(0)}$}
\end{align*}
\endgroup
since the lower bound of $f$ is $0$. Taking the limit yields
\begingroup\abovedisplayskip=3pt \belowdisplayskip=3pt
\begin{equation}
\label{eq:reihe_bounded}
\resizebox{.6\hsize}{!}{$\sum_{k=0}^{\infty} \alpha^{(k)} m_f^{(k)}(\mathbf{\delta x}^{(k)}, \mathbf{\delta u}^{(k)}) < \infty,$}
\end{equation}
\endgroup
implying $\alpha^{(k)} m_f^{(k)}(\mathbf{\delta x}^{(k)}, \mathbf{\delta u}^{(k)})\to0$ since Lemma \ref{lem:characterization_optimality} (iii) guarantees non-negativity of the summands. To finish the proof it needs to be shown that $\alpha^{(k)}$ is bounded away from zero for $k\to\infty$.
For a contradiction, assume that the predicted reduction $m_f^{(k)}$ does not converge to 0, i.e., $(\mathbf{\delta x}^{(k)}, \mathbf{\delta u}^{(k)})\not\to0$. This implies $\alpha^{(k)}\to0$.
If $\alpha^{(k)}=1$, there is nothing to show.
So, assume that for $\alpha^{(k)}<1$ the Armijo condition \eqref{eq:sufficient_decrease} is satisfied and dropping the index $k$, then for $2\alpha$, \eqref{eq:sufficient_decrease} is not satisfied. Equation \eqref{eq:sufficient_decrease} being not satisfied is equivalent to
$F(\mathbf{\hat{u}}(2\alpha)) - F(\mathbf{\bar{u}}) > 2\eta\alpha\nabla F(\mathbf{\bar{u}})^{\top}\delta \mathbf{u}$.
From here, knowing that $F$ is once continuously differentiable with Lipschitz gradients and bounded Hessian a standard unconstrained optimization proof, e.g., \cite[Thm~9.1]{Diehl2016}, can be applied to bound $\alpha$ from below.
The DDP step needs to be expressed in terms of $\delta \mathbf{u}$ using Lemma~\ref{lemma:ddp_estimate}. Then, it can be shown that $\alpha > M \Vert \delta\mathbf{u}\Vert^2$ for $M>0$. From this, it follows that $\alpha\not\to 0$ which is a contradiction to \eqref{eq:reihe_bounded}. As a consequence, $m_f^{(k)}(\mathbf{\delta x}^{(k)}, \mathbf{\delta u}^{(k)})\to0$ and convergence follows from Lemma \ref{lem:characterization_optimality}.
\end{proof}
Theorem \ref{theorem:global_convergence} guarantees convergence of FP-DDP to a KKT point, but in general, it does not guarantee that $f(\mathbf{x},\mathbf{u})=0$ at the solution.
If the OCP \eqref{eq:nlp_ocp} is convex, FOCP \eqref{eq:nlp_focp} is also convex.
If therefore, the original OCP has feasible points, all local solutions of \eqref{eq:nlp_focp} are global solutions and fulfill $f(\mathbf{x},\mathbf{u})=0$. In this case, FP-DDP is guaranteed to converge to a feasible point.
For nonconvex problems, this cannot be guaranteed since many local solutions can exist. If the optimal objective function value is nonzero, this indicates that the problem might be locally infeasible \cite{Waechter2006}. In the simulation examples presented next, this case never occurred. FP-DDP always found a feasible solution.
\section{Simulation Results}
\label{sec:simulation_results}
This section demonstrates the capabilities of FP-DDP on two test problems, (i) a fixed-time P2P motion of an unstable system, (ii) a free-time P2P motion of a cart pendulum system with a circular obstacle.
\subsection{Implementation}
The OCPs are formulated using the rapid prototyping toolbox \texttt{rockit} \cite{Gillis2020} built on top of \texttt{CasADi} \cite{Andersson2019}.
The FP-DDP algorithm is implemented within \texttt{acados}~\cite{Verschueren2022} utilizing the Riccati recursion from \texttt{HPIPM}~\cite{Frison2020a}.
The code to reproduce the results and a prototypical python implementation of FP-DDP are publicly available~\cite{fp_ddp_code}.
The parameters of FP-DDP were chosen as $\eta=10^{-6}$, $\alpha_{\min}=10^{-17}$, $\mu_{\min}=10^{-16}$, $\mu^{(0)}=10^{-3}$, $\lambda=5$, $\varepsilon_{\mathrm{F}}=10^{-12}$, and $\varepsilon_{\mathrm{S}}=10^{-8}$.
In example (ii), the \texttt{IPOPT} default settings were used. For the truncated Newton method of \texttt{scipy.optimize.minimize}, $\mathrm{tol}=10^{-8}$ was used.
All simulations were carried out using an i7-10810U CPU.
\subsection{Fixed-Time P2P Motion of Unstable System}
In this test problem, FP-DDP is compared against a Direct Single Shooting (DSS) implementation using the globalization of FP-DDP and a Direct Multiple Shooting (DMS) implementation using the steering penalty method described in \cite{Byrd2009}. 
The example originates from \cite{chen1998} and is adopted from \cite{Baumgaertner2023a}. 
The dynamics are given by
\begingroup\abovedisplayskip=2pt \belowdisplayskip=2pt
\begin{align*}
\dot{x}_1\!=\!x_2\!+\!u\left(\zeta+(1\shortminus\zeta) x_2\right),\:\dot{x}_2\!=\!x_1\!+\!u\left(\zeta\shortminus4(1\shortminus\zeta) x_2\right),
\end{align*}
\endgroup
with $\zeta=0.7$.
The problem is discretized using an explicit fourth-order Runge-Kutta integrator with $10$ internal integration steps, a time interval size of $h=0.25$, and an OCP horizon $N=20$.
Control bounds $\vert u_i\vert\leq u_{\max}=1.5$ are imposed.
The task is to perform a point-to-point motion of the dynamical system from the initial state $\bar{x}_0=(0.42,0.45)^{\top}$ to the terminal state $\bar{x}_N=(0.0, 0.1)^{\top}$.
As in \cite{Baumgaertner2023a}, a dynamically feasible initial guess is obtained by a forward simulation of the system using the feedback law which is retrieved from an LQR controller linearized at the equilibrium point $\mathbf{x}=0$ and $\mathbf{u}=0$.
All approaches can find a feasible trajectory.
\begingroup\abovedisplayskip=3pt \belowdisplayskip=3pt
\begin{figure}[htpb]
\centering
\includegraphics[width=0.98\columnwidth]{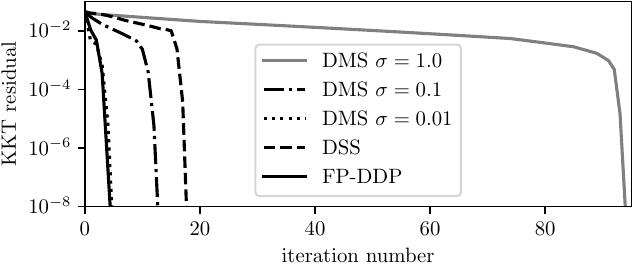}
\caption{KKT residuals for DMS with penalty parameter $\sigma\in\{1.0, 0.1, 0.01\}$, DSS, and FP-DDP.}
\label{fig:chen_allgoewer_kkt_residual}
\end{figure}
\endgroup
Fig. \ref{fig:chen_allgoewer_kkt_residual} shows the convergence of the different methods. FP-DDP takes 5 iterations to converge to a feasible solution. 
Interestingly, always full steps are taken.
DSS takes 18 iterations until the optimal solution is found. 
Since the underlying system is highly unstable, the open-loop forward simulation in DSS yields trajectories that are far from the solution. 
This results in smaller steps for DSS and therefore slower convergence. 
Looking at DMS reveals that the choice of the penalty parameter highly influences the convergence speed. 
Setting the initial penalty parameter $\sigma=1$ as in \cite{Byrd2009} results in convergence in 94 iterations. The Maratos effect seems to occur.
For $\sigma=0.01$, the algorithm convergences within 5 iterations. 
This demonstrates the dependence of merit-function-based algorithms on their penalty parameter whereas FP-DDP does not require this parameter choice.

\subsection{Free-Time P2P Motion of Cart Pendulum}
This section discusses a free-time cart pendulum problem with obstacle avoidance.
The system dynamics and parameters are given in \cite{Verschueren2016}.
The states $x=(p, v, \theta, \omega)^{\top}\in\mathbb{R}^4$ consist of position, velocity, pole angle, and pole angular velocity.
The control (force) input is given by $u=F\in\mathbb{R}$.
The free-time $T$ is modeled as an additional state using time scaling.
The task is to move from an initial upward pole position corresponding to $\bar{x}_1=(0, 0, 0, 0)^{\top}$ to the final upward position $\bar{x}_{N+1}=(5.0, 0, 0, 0)^{\top}$.
In addition, some bound constraints $\underline{x}\leq x_i\leq\bar{x}$ with $i=2,\ldots, N$ and $\underline{x}=(-1.0, -2.0, -\frac{\pi}{4}, -0.5)$, $\bar{x}=(6.0, 2.0, \frac{\pi}{4}, 0.5)$, as well as $|u_i|\leq5.0$ for $i=1,\ldots, N$ are imposed.
During the motion, the endpoint of the pole has to avoid a circular obstacle with its center at $(\tilde{p}, 0.9)^{\top}$, $\tilde{p}\in\mathbb{R}$ and with a radius of $0.3$. 
The horizontal position of the center of the obstacle is chosen from $100$ linearly spaced values between $0.7$ and $4.3$, i.e., in total, $100$ problems are solved.

FP-DDP is compared against \texttt{IPOPT} \cite{Waechter2006}, which solves the same problem formulation as FP-DPP, and against \texttt{scipy.optimize.minimize} using a Truncated-Newton method \cite{Nocedal2006}, \cite{scipy2020} and solving the unconstrained NLP of \texttt{SEQUOIA} which is given in Table~\ref{tab:comparison_feasibility_problems}.
\texttt{IPOPT} is called through its \texttt{CasADi} interface which provides the derivative information and enables code generation. Note that code generation increases execution speed, but does not alter the algorithm. 
The derivatives for the unconstrained NLP as well as the GGN Hessian are manually built in CasADi and passed to \texttt{scipy}.
\texttt{IPOPT} uses exact Hessian information. 
The solvers were initialized with a constant trajectory at the initial position and constant value $T=5.0$.

To compare the different algorithms, the Dolan and Mor\'e performance profiles \cite{Dolan2001} are used. 
A problem is solved correctly if the final iterate is dynamically feasible and has an objective function value smaller than $10^{-8}$. 
Otherwise, it is a failure. 
As performance metrics, we regard the wall time and the number of Hessian evaluations. Fig.~\ref{fig:performance_plot_cart_pendulum} shows the results of the different solvers. 
The vertical axis indicates the number of test problems that were solved. 
The horizontal axis on the left shows the absolute wall time and on the right side the ratio of Hessian evaluations needed to converge relative to the best solver, i.e., the solver did not need more than its Hessian evaluation ratio times the number of Hessian evaluations of the best solver until convergence. 
Higher values on the vertical axis in plot (b) indicate better performance of an algorithm relative to another algorithm.
\begingroup\abovedisplayskip=3pt \belowdisplayskip=2pt
\begin{figure}[htpb]
\centering
\includegraphics[width=\columnwidth]{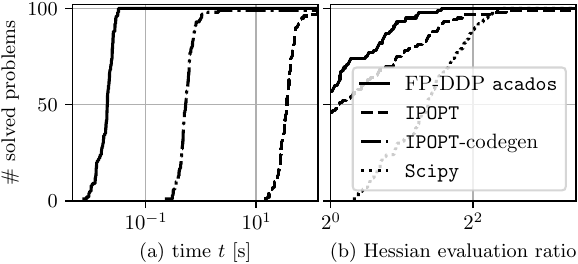}
\setlength{\belowcaptionskip}{-8pt}
\caption{Performance plot of wall time (left) and Hessian evaluations ratio with respect to the best solver (right).}
\label{fig:performance_plot_cart_pendulum}
\end{figure}
\endgroup

It can be seen that FP-DDP outperforms the other algorithms and is in almost 55\% of the cases the algorithm with the least amount of Hessian evaluations until convergence is achieved.
\texttt{IPOPT} fails once in the restoration phase.
Apart from that all solvers are robust and converge towards feasible trajectories. 
Overall, the wall time of FP-DDP implemented within \texttt{acados} is an order of magnitude lower than the code generated \texttt{IPOPT}. 
Due to the slow run time, \texttt{scipy} was excluded from the wall time plot. 
This demonstrates the strong performance of FP-DDP.
\section{Conclusion}
\label{sec:conclusion}
This paper introduced an efficient algorithm for finding feasible trajectories in direct optimal control based on DDP and an Armijo-type line search. Theoretical convergence guarantees were derived and the efficiency of the algorithm was demonstrated against state-of-the-art methods. Possible future research areas include the embedding of FP-DDP into an OCP NLP solver.
\bibliography{bibliography}
\bibliographystyle{plain}
\addtolength{\textheight}{-12cm}
\end{document}